\numberwithin{equation}{section}
\theoremstyle{plain}
\newtheorem{thm}{Theorem}[section]
\newtheorem{cor}[thm]{Corollary}
\newtheorem{lem}[thm]{Lemma}
\theoremstyle{definition}
\newtheorem*{notation}{Notation}
\newtheorem{example}[thm]{Example}
\theoremstyle{remark}
\newtheorem{rem}[thm]{Remark}
\newcommand{\C}{\mathbb{C}}
\newcommand{\R}{\mathbb{R}}
\newcommand{\K}{\mathbb{K}}
\newcommand{\N}{\mathbb{N}}
\newcommand{\p}{\partial}
\DeclareMathOperator{\grad}{\p}
\def\({(\!(}
\def\){)\!)}
\title[ON DEFORMATION WITH CONSTANT MILNOR NUMBER]{ON  DEFORMATION WITH CONSTANT MILNOR NUMBER AND NEWTON
POLYHEDRON}
\author{Ould M Abderrahmane}
\address{D\'eparement de Math\'ematiques,Universit\'e des Sciences,
de Technologie et de Médecine BP. 880, Nouakchott, Mauritanie}
\email{ymoine@univ-nkc.mr}
\subjclass[2010]{Primary  14B05, 32S05.}
\thanks{This research was supported by the Japan Society for the Promotion of Science.}
\newcommand{\abstracttext}{
 We show that every $\mu$-constant family of isolated hypersurface singularities satisfying a nondegeneracy condition in the
 sense of Kouchnirenko, is topologically
trivial, also is equimultiple.}
\begin{document}
\begin{abstract} \abstracttext \end{abstract} \maketitle

Let $f \colon (\C^n, 0) \rightarrow (\C, 0)$ be the germ of a
holomorphic function with an isolated singularity. The Milnor number
of a germ $f$, denoted by $\mu(f)$, is algebraically defined as the
$\text{dim}\,\mathcal{O}_n/{J(f)}$, where $\mathcal{O}_n$ is the
ring of complex analytic function germs $\colon (\C^n, 0) \to (\C,
0)$ and  $J(f) $ is the Jacobian ideal in $\mathcal{O}_n$ generated
by the partial derivatives $\{\frac{\grad f}{\grad
z_1},\cdots,\frac{\grad f}{\grad z_n}\}$. We recall that
 the multiplicity $m(f)$
is defined  as the lowest degree in the power series expansion of
$f$ at $0\in \C^n$. Let $F \colon (\C^n \times \C, 0) \to (\C, 0)$
be the deformation of $f$ given by $F(z, t) = f(z) + \sum
c_{\nu}(t)z^{\nu}$, where $c_{\nu}\colon (\C, 0) \to (\C, 0)$ are
germs of holomorphic functions. We use the notation $F_t(z) = F(z,
t)$ when $t$ is fixed. Let $m_t$ denote the multiplicity and $\mu_t$
denote the Milnor number of $F_t$ at the origin. The deformation $F$
is equimultiple (resp. $\mu$-constant) if $m_0 = m_t$ (resp. $\mu_0
= \mu_t$) for small $t$. It is well-known by the result of
L\^e-Ramanujam \cite{LR}. that for $n \neq 3$, the topological type
of the family $F_t$ is constant under $\mu$-constant deformations.
The question is still open for $n = 3$. However, under some
additional assumption, positive answers have been given. For
example, if $F_t$ is non-degenerate in the sense of Kouchnirenko
\cite{AGK} and the Newton boundary $\Gamma(F_t)$ of $F_t$ is
independent of $t$, i.e., $\Gamma(F_t) = \Gamma(f)$, it follows that
$\mu^{\ast}(F_t)$ is constant, and hence $F_t$ is topologically
trivial (see \cite{MO,BT} for details).
 Motivated by the Brian\c con-Speder $\mu$-constant family
$F_t(z) = z_1^5 + z_2z_3^7 + z_2^{15} + tz_1z_3^6$, which is
topologically trivial but not $\mu^{\ast}$-constant, M. Oka
\cite{MOKA} shows that any non-degenerate family of type $F(z, t) =
f(z) + tz^A$ for $A=(A_1,\dots, A_n)\in \N^{n}$, where $\N$ is the
set of nonnegative integers and
$z^{A}=z_{1}^{A_{1}}z_{2}^{A_{2}}\cdots z_{n}^{A_{n}}$ as usual, is
topologically trivial, under the assumption of $\mu$-constancy. Our
purpose of this paper is to generalize this result, more precisely,
we show that every $\mu$-constant non-degenerate family $F_t$ with
not necessarily Newton boundary $\Gamma(F_t)$ independent of $t$, is
topologically trivial. Moreover, we show that $F$ is equimultiple,
which gives a positive answer to a question of Zariski \cite{OZ,
greuel, DBO} for a non-degenerate family. To prove the main result
(Theorem \ref{manain} below), we shall use the notion of
$(c)$-regularity in stratification theory, introduced by K. Bekka in
\cite{KB}, which is weaker than  Whitney regularity, nevertheless
$(c)$-regularity implies topological triviality. First, we give a
characterization of $(c)$-regularity (Theorem \ref{critere} below).
By using it, we can show that the $\mu$-constancy condition for a
non-degenerate family implies Bekka's (c)-regularity condition and
then obtain the topological triviality as a corollary.

\begin{notation}
To simplify the notation, we will adopt the following conventions\,:
for a function $F(z, t)$ we denote by $\grad F$ the gradient of $F$
and by $\grad_z F$ the gradient of $F$ with respect to variables
$z$.

Let $\varphi,\,\, \psi \colon(\C^n, 0) \to \R$ be two function
germs. We say that $ \varphi(x)\lesssim \psi(x)$ if there exists a
positive constant $C> 0$ and an open neighborhood $U$ of the  origin
in $\C^n$ such that $\varphi(x)\leq C \; \psi(x)$, for all $x \in
U$. We write $ \varphi(x) \sim \psi(x)$  if $\varphi(x) \lesssim
\psi(x)$ and $\psi(x)\lesssim \varphi(x)$. Finally,
$|\varphi(x)|\ll|\psi(x)|$ (when $x$ tends to $x_0$) means
$\lim_{x\to x_0}\frac{\varphi(x)}{\psi(x)}=0$.

\end{notation}

\bigskip
\section{ Newton polyhedron, main results}\label{main}
\bigskip

First we recall some basic notions about the Newton polyhedron
(see\cite{AGK, MO} for details), and state the main result.

Let $f \colon (\C^n, 0) \rightarrow (\C, 0)$ be an analytic function
defined by a convergent power series $\sum c_{\nu}x^{\nu}$,  we
define $supp(f) = \{\nu\in \N^n : c_{\nu}\neq 0\}$. Also, let
$\R^n_+ = \{(x_1, \dots, x_n) \in \R^n, \text{ each } x_i \geq 0, i
= 1,\dots, n\}$. The Newton polyhedron of $f$, denoted by
$\Gamma_+(f)\subset \R^n$ is defined by the convex hull of $\{k+v
\mid k\in supp(f),\; v\in\R^n_+\}$, and let $\Gamma(f)$ be the
Newton boundary, i.e., the union of the compact faces of
$\Gamma_+(f)$. For a face $\gamma$ of $\Gamma(f)$, we write
$f_{\gamma}(z):= \sum_{\nu\in\gamma} c_{\nu}x^{\nu}$. We say that
$f$ is non-degenerate if, for any face $\gamma$ of $\Gamma(f)$, the
equations $\frac{\partial f_{\gamma}}{\partial x_1}= \dots =
\frac{\partial f_{\gamma}}{\partial x_n} = 0$ have no common
solution on $x_1=\cdots = x_n \neq 0$. The power series $f$ is said
to be convenient if $\Gamma_+(f)$ meets each of the coordinate axes.
We let $\Gamma_-(f)$ denote the compact polyhedron which is the cone
over $\Gamma(f)$ with the origin as a vertex. When $f$ is
convenient, the Newton number $\nu(f) $ is defined as $\nu(f) =
n!V_n - (n - 1)!V_{n-1} +\cdots · + (-1)^{n-1}V_1 + (-1)^n$, where
the $V_n$ are the $n$-dimensional volumes of $\Gamma_-(f)$
 and for $1 \leq k \leq n - 1$, $V_k$
is the sum of the $k$-dimensional volumes of the intersection of
$\Gamma_-(f)$ with the coordinate planes of dimension $k$. The
Newton number may also be defined for a non-convenient analytic
function (see [6]). Finally, we define the Newton vertices of $f$ as
$ver(f) = \{ \alpha : \alpha \text{ is a vertex of } \Gamma(f) \}$.

Now we can state the main result

\begin{thm}\label{manain}
 Let $F \colon
(\C^n \times \C,0) \to (\C,0)$ be a one parameter deformation of a
holomorphic germ $f \colon (\C^n, 0) \to (\C, 0)$ with an isolated
singularity such that the Milnor number $\mu(F_t)$ is constant.
Suppose that $F_t$ is non-degenerate. Then $F_t$ is topologically
trivial, and moreover, $F$ is equimultiple.
\end{thm}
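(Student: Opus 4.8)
The plan is to obtain topological triviality from Bekka's $(c)$-regularity, which by Theorem~\ref{critere} is equivalent to a Łojasiewicz inequality along the parameter axis $\Sigma=\{0\}\times\C$; in the present coordinates this inequality reads
\[
\Big|\frac{\partial F}{\partial t}(z,t)\Big|\;\lesssim\;|z|\,\big|\grad_z F(z,t)\big|
\]
in a neighbourhood of $0$. Indeed, once this holds one may set $a_i=-(\partial F/\partial t)\,\overline{\partial_{z_i}F}\,|\grad_z F|^{-2}$ and check that $v=\partial_t+\sum_i a_i\,\partial_{z_i}$ is a controlled vector field with $v(F)\equiv0$ and $|a_i|\lesssim|z|$; integrating $v$ trivialises the family. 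Thus the whole argument reduces to proving the displayed inequality from non-degeneracy and $\mu$-constancy.

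First I would record that, by Kouchnirenko's theorem \cite{AGK}, non-degeneracy gives $\mu(F_t)=\nu(F_t)$ for each small $t$, so the hypothesis $\mu(F_t)\equiv\mu_0$ forces the Newton number $\nu(F_t)$ to be constant. Next, for a fixed small $t$ the non-degeneracy of $F_t$ yields the standard Newton estimate: for every exponent $\nu$ lying in $\Gamma_+(F_t)$ one has $|z^{\nu}|\lesssim|z|\,|\grad_z F_t(z)|$ near $0$. Since every monomial $z^{\nu}$ occurring in $\partial F/\partial t=\sum_{\nu}c_{\nu}'(t)z^{\nu}$ has $\nu\in\mathrm{supp}(F_t)\subset\Gamma_+(F_t)$, summing these estimates over the finitely many active $\nu$ gives the desired bound for each fixed $t$.

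The delicate point is \emph{uniformity} in $t$, and this is exactly where constancy of $\mu$ (equivalently, of $\nu(F_t)$) enters. When $\Gamma(F_t)\neq\Gamma(f)$ the Newton boundary genuinely moves with $t$, and both the implied constants and the neighbourhood in the Newton estimate a priori degenerate as $t\to0$. I would use $\nu$-constancy to exclude this degeneration: by the monotonicity of the Newton number under enlargement of the polyhedron, a family of faces that collapsed as $t\to0$ would strictly drop $\nu(F_t)$ below $\nu(f)$, contradicting $\mu$-constancy. Controlling the finitely many combinatorial types of $\Gamma(F_t)$ this way lets one choose the constants and the neighbourhood uniformly for small $t$, upgrading the fibrewise bound to the inequality above. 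With $(c)$-regularity in hand, topological triviality follows from Bekka's theory \cite{KB}.

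The main obstacle is precisely this uniform control of the moving Newton boundary together with the non-degeneracy estimates on the faces that appear or disappear; I expect to organise it by stratifying the exponent data and verifying non-degeneracy along each one-parameter family of faces. Finally, equimultiplicity is read off from the same analysis: since $m(F_t)=\min\{|\nu|:\nu\in\Gamma_+(F_t)\}$, a drop $m(F_t)<m_0$ would introduce an exponent of $\ell_1$-degree below $m_0$ strictly inside $\Gamma_-(f)$, enlarging $\Gamma_+$ near the origin and hence, by the monotonicity of the Newton number, strictly lowering $\nu(F_t)$ — again contradicting constancy of $\mu$. Therefore $m(F_t)=m_0$ and $F$ is equimultiple.
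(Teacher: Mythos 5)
Your opening reduction contains the decisive error. Theorem~\ref{critere} characterizes $(c)$-regularity for the \emph{Newton-adapted} control function $\rho(z)=\sum_{\alpha\in\text{ver}(F_t)}z^{\alpha}\overline{z}^{\alpha}$, and its condition (iii) reads $|\grad_t F|\ll \|\grad F\wedge\grad_z\rho\|/|\grad_z\rho|$; it is \emph{not} the inequality $|\partial F/\partial t|\lesssim |z|\,|\grad_z F|$ that you take as your target. Your inequality is exactly the Kuo--Verdier condition $(w)$ for the pair $(X,Y)$: the distance from the direction $\partial_t$ to $T_{(z,t)}X$ is $|\partial_t F|/|\grad F|$ and $\operatorname{dist}((z,t),Y)=|z|$. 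Since $(w)$ implies Whitney $(b)$, and $(b)$-regularity of a family of hypersurfaces forces $\mu^{\ast}$-constancy (Teissier \cite{BT}), your target inequality is \emph{false} under the hypotheses of Theorem~\ref{manain}: the Brian\c{c}on--Speder family $F_t=z_1^5+z_2z_3^7+z_2^{15}+tz_1z_3^6$ \cite{BS} is $\mu$-constant and non-degenerate for every $t$, yet, as recalled in the introduction, it is not $\mu^{\ast}$-constant, hence not $(b)$- and a fortiori not $(w)$-regular along $\{0\}\times\C$. So no argument can close your ``uniformity'' step: the fibrewise estimate $|z^{\nu}|\lesssim|z|\,|\grad_z F_t(z)|$ does hold for each fixed $t$, but its uniform version is precisely $(w)$ and genuinely fails. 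Avoiding this too-strong condition is the whole point of working with Bekka's $(c)$-regularity for the control function $\rho$ built from the Newton vertices rather than from $|z|^2$.

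The combinatorial principle you invoke twice --- that a strict enlargement of the Newton polyhedron strictly lowers the Newton number --- is also false, and the same example shows it: for $t\neq0$ one has $\Gamma_+(F_t)\supsetneq\Gamma_+(f)$ (the vertex $(1,0,6)$ does not lie in $\Gamma_+(f)$), yet $\nu(F_t)=\nu(f)$ by \eqref{3.1}. Corollary~\ref{furuya} gives only weak monotonicity, so your exclusion of ``collapsing faces'' and your equimultiplicity argument both rest on an unproved strictness claim; for equimultiplicity what you would actually need is that adjoining an exponent of degree $<m(f)$ strictly drops $\nu$, which you neither prove nor cite. The paper's proof takes a route that avoids both problems: it introduces the auxiliary deformation $\widetilde F(z,t,\lambda)=F_t(z)+\sum_{\alpha\in\text{ver}(F_t)}\lambda_{\alpha}z^{\alpha}$, shows it is $\mu$-constant (upper semicontinuity of $\mu$ combined with Theorem~\ref{kouchnirenko} and Corollary~\ref{furuya}), and extracts from Theorem~\ref{le-saito-tessier} the Thom-type inequality \eqref{3.3}. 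Equimultiplicity is then read off from \eqref{3.3} at $t=\lambda=0$ (each vertex monomial satisfies $|z^{\alpha}|\ll|\grad_z f|$, so $|\alpha|\geq m(f)$), and Lemma~\ref{lem} together with \eqref{3.9} converts \eqref{3.3} into condition (iii) of Theorem~\ref{critere} for the control function $\rho$, yielding $(c)$-regularity and hence topological triviality by \cite{KB}.
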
\medskip

\begin{rem}
In the above theorem, we do not require the independence of $t$ for
the Newton boundary $\Gamma(F_t)$
\end{rem}
\bigskip
\section{criterion for $(c)$-regularity}
\bigskip
By way of notation, we let $G(k, n)$ denote the set of
$k$-dimensional linear subspace of  the vector space $\K^n$, where
$\K=\R$ or $\C$.

  Let $M$ be a smooth manifold, and let $X$, $Y$ be smooth
submanifolds of $M$ such that $Y \subset \overline{X}$ and $X \cap Y
= \emptyset$.
\medskip
\begin{enumerate}
\item[(i)](Whitney $(a)$-regularity)\\
$(X, Y )$ is $(a)$-regular at $y_0\in  Y$ if :\\
for each sequence of points $\{x_i\}$ which tends to $y_0$ such that
the sequence of tangent spaces $\{T_{x_i}X\}$ tends  to $\tau$ in
the grassmannian $G(\text{dim} \,X,\text{dim}\, M)$, one hase
$T_{y_0}Y \subset\tau$. We say $(X, Y )$ is $(a)$-regular if it is
$(a)$-regular at any point $y_0\in Y$.

\item[(ii)] (Bekka $(c)$-regularity)\\
Let $\rho$ be a smooth non-negative function such that $\rho^{-1}(0)
= Y$. $(X, Y )$ is $(c)$- regular at $y_0 \in Y$ for the control
function $\rho$ if :\\
for each sequence of points $\{x_i\}$ which tends to $y_0$ such that
the sequence of tangent spaces $\{\text{Ker}d\rho(x_i) \cap
T_{x_i}X\}$ tends to $\tau$ in the grassmannian $G(\text{dim} \,X\,-
1,\text{dim} \,M)$, one hase $T_{y_0}Y \subset\tau$. $(X, Y )$ is
$(c)$-regular at $y_0$ if it is $(c)$-regular for some control
function $\rho$. We say $(X, Y )$ is $(c)$-regular if it is
$(c)$-regular at any point $y_0\in Y$.
\end{enumerate}
\medskip

Let $F \colon (\C^n \times\C, \{0\} \times \C) \to (\C,0)$ be a
deformation of an analytic function $f$. We denote by $\Sigma(V_F) =
\{F^{-1}(0) - \{0\}\times \C,\;\; \{0\} \times\C\}$ the canonical
stratification of the germ variety $V_F$ of the zero locus of $F$.
We may assume that $f$ is convenient, this is not a restriction when
it defines an isolated singularity, in fact, by adding $z_i^N$ for a
sufficiently large $N$ for which the isomorphism class of $F_t$ does
not change. Hereafter, we will assume that $f$ is convenient,
$$
 X=F^{-1}(0) - \{0\}\times \C, \,\,\,\, Y= \{0\} \times\C \text{ and }
 \rho(z)=\sum_{\alpha\in
 \text{ver}(F_t)}z^{\alpha}\overline{z}^{\alpha}.
$$

Here ver$(F_t)$ denotes the Newton vertices of $F_t$ when $t \neq
0$.

Note that by the convenience assumption on $f$, $\rho^{-1}(0) = Y$.

We also let
$$
\partial{\rho} =\sum_{ i=1}^n \frac{\partial{\rho}}{\partial z_i} \frac{\partial}{\partial z_i}+
\frac{\partial{\rho}}{\partial \overline{z}_i}
\frac{\partial}{\partial \overline{z}_i}=\partial_z{\rho}+
\partial_{\overline{z}}\rho
$$
and
$$
\partial{F} =\sum_{ i=1}^n \frac{\partial{F}}{\partial z_i} \frac{\partial}{\partial
z_i}+ \frac{\partial{F}}{\partial t}=\partial_z{F}+
\partial_{t}F.
$$

{\bf Calculation of the map $\partial_z\rho_{|X}$}

First of all we remark that $\grad_z\rho = \grad_z\rho_{|X} +
\grad_z\rho_{| N} $ (where $N$ denotes the normal space to $X$).
Since $N$ is generated by the gradient of $F$, we have that
$\grad_z\rho = \grad_z\rho_{|X} + \eta\grad F$. On the other hand,
$\langle \grad_z\rho_{|X}, \grad F \rangle=0$, so we get
$\eta=\frac{\langle \grad_z\rho, \grad F \rangle}{| \grad F|^2}$. It
follows that
\begin{equation}\label{control}
\grad_z\rho_{|X} =\grad_z\rho -\frac{\langle \grad_z\rho, \grad F
\rangle}{| \grad F|^2}\grad F=(\grad_z\rho_{|X})_z +
(\grad_z\rho_{|X})_{t},
\end{equation}
weher
$$
(\grad_z\rho_{|X})_z =\grad_z\rho -\frac{\langle \grad_z\rho, \grad
F \rangle}{| \grad F|^2}\grad_z F, \;\; (\grad_z\rho_{|X})_t
=-\frac{\langle \grad_z\rho, \grad F \rangle}{| \grad F|^2}\grad_t F
$$
and
$$
|\grad_z\rho_{|X}|^2=\frac{| \grad F|^2| \grad_z\rho|^2-| \langle
\grad_z\rho, \grad F \rangle|^2}{| \grad F|^2}=\frac{\|\grad F
\wedge\grad_z\rho\|^2}{| \grad F|^2}.
$$
Then we can characterise the $(c)$-regularity as follows :
\begin{thm}\label{critere}
Consider $X$ and $Y$ as above. The following conditions are
equivalent

(i) $(X, Y )$ is $(c)$-regular for the the control function $\rho$.

(ii) $(X, Y )$ is $(a)$-regular and $|(\grad_z\rho_{|X})_{t}| \ll
|\grad_z\rho_{|X}|$ as $(z, t) \in X$ and $(z, t) \to Y$.

(iii) $| \grad_t F|\ll \frac{\|\grad F \wedge\grad_z\rho\|}{|
\grad_z \rho|}$ as $(z, t) \in X$ and $(z, t) \to Y$.

\end{thm}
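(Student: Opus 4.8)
The plan is to prove the two equivalences (i) $\Leftrightarrow$ (ii) and (ii) $\Leftrightarrow$ (iii) separately, the first being geometric and the second a direct computation from \eqref{control}. Throughout, note that $T_{y_0}Y$ is the complex line $\{0\}\times\C$, i.e. the real $2$-plane $P=\mathrm{span}_{\R}\{e_t,ie_t\}$ where $e_t$ is the unit $t$-vector. For $x=(z,t)\in X$ near $Y$ set $W_x=\mathrm{Ker}\,d\rho(x)\cap T_xX$. By compactness of the grassmannian, $(X,Y)$ is $(c)$-regular for $\rho$ at $y_0$ exactly when $\mathrm{proj}_{W_x^{\perp}}(v)\to 0$ as $x\to y_0$ along $X$, for every $v\in P$; this reformulation is what makes both equivalences tractable, since it replaces the limit of subspaces by the decay of a projection.

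For (i) $\Leftrightarrow$ (ii): where $d\rho|_{T_xX}\neq 0$ the complement $W_x^{\perp}$ splits orthogonally as $N_x\oplus L_x$, where $N_x$ is the normal space to $X$ (generated by $\grad F$) and $L_x$ is the real line spanned by the tangential gradient $\grad_z\rho_{|X}$ from \eqref{control}; here I use that $\rho$ is independent of $t$, so its ambient gradient is $\grad_z\rho$ and its tangential part is exactly $\grad_z\rho_{|X}$. Orthogonality gives $|\mathrm{proj}_{W_x^{\perp}}(v)|^2=|\mathrm{proj}_{N_x}(v)|^2+|\mathrm{proj}_{L_x}(v)|^2$, so $P$ lands in the limit of $W_x$ iff both projections of $P$ decay. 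Since $N_x$ is a \emph{complex} line, $\mathrm{proj}_{N_x}(e_t)$ and $\mathrm{proj}_{N_x}(ie_t)$ have equal magnitude $|\grad_t F|/|\grad F|$, whose decay is precisely Whitney $(a)$-regularity. Since $L_x$ is only a real line, I would combine the projections of $e_t$ and $ie_t$ onto it: their squares add up to the full complex $t$-component, giving $|\mathrm{proj}_{L_x}|_P|^2 = |(\grad_z\rho_{|X})_t|^2/|\grad_z\rho_{|X}|^2$, whose decay is $|(\grad_z\rho_{|X})_t|\ll|\grad_z\rho_{|X}|$. Together this is (ii), so (i) $\Leftrightarrow$ (ii).

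For (ii) $\Leftrightarrow$ (iii): substitute $|(\grad_z\rho_{|X})_t|=|\langle\grad_z\rho,\grad F\rangle|\,|\grad_t F|/|\grad F|^2$ and $|\grad_z\rho_{|X}|=\|\grad F\wedge\grad_z\rho\|/|\grad F|$ from \eqref{control}, and let $\theta$ be the angle between $\grad F$ and $\grad_z\rho$, so $\|\grad F\wedge\grad_z\rho\|=|\grad F|\,|\grad_z\rho|\sin\theta$ and $|\langle\grad_z\rho,\grad F\rangle|=|\grad F|\,|\grad_z\rho|\cos\theta$. Then the $(a)$-clause of (ii) reads $|\grad_t F|\ll|\grad F|$, its ratio clause reads $|\grad_t F|\cos\theta\ll|\grad F|\sin\theta$, and (iii) reads $|\grad_t F|\ll|\grad F|\sin\theta$. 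The implication (iii) $\Rightarrow$ (ii) is immediate as $\sin\theta,\cos\theta\le 1$. For (ii) $\Rightarrow$ (iii) I would use both clauses via the elementary dichotomy that at each point either $\sin\theta\ge\tfrac12$, in which case $|\grad_t F|/(|\grad F|\sin\theta)\le 2\,|\grad_t F|/|\grad F|\to 0$, or $\cos\theta\ge\tfrac12$, in which case $|\grad_t F|/(|\grad F|\sin\theta)\le 2\,|\grad_t F|\cos\theta/(|\grad F|\sin\theta)\to 0$; either way (iii) follows.

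I expect the main obstacle to be the geometric step (i) $\Leftrightarrow$ (ii): the careful bookkeeping between the real orthogonal structure governing the grassmannian limits and the Hermitian quantities of \eqref{control}. The key point that makes it work — and must be argued, not assumed — is that one tests the whole complex line $T_{y_0}Y$ (both $e_t$ and $ie_t$): because $N_x$ is complex the normal projection is symmetric in $e_t,ie_t$ and yields $(a)$-regularity, while the two real projections onto $L_x$ recombine to the full complex magnitude $|(\grad_z\rho_{|X})_t|$, reconciling the real line $L_x$ with the complex gradient. Establishing the orthogonal splitting $W_x^{\perp}=N_x\oplus L_x$ and the two projection-magnitude identities is the heart of the matter; once they are in place the rest is elementary, the only non-routine point being that (ii) $\Rightarrow$ (iii) genuinely requires both clauses of (ii).
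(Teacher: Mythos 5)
Your proposal is correct, but it is routed differently from the paper at both halves, and in both cases yours is the more self-contained argument. For (i) $\Leftrightarrow$ (ii) the paper gives no proof at all: it simply cites Theorem 1 of \cite{OMA}. Your orthogonal splitting $W_x^{\perp}=N_x\oplus L_x$ and the two projection identities --- complex-linearity of $N_x$ forcing $|\mathrm{proj}_{N_x}(e_t)|=|\mathrm{proj}_{N_x}(ie_t)|=|\grad_t F|/|\grad F|$, which is exactly the $(a)$-condition, and the recombination of the $e_t$, $ie_t$ projections on the \emph{real} line $L_x$ into the single quantity $|(\grad_z\rho_{|X})_t|/|\grad_z\rho_{|X}|$ --- supply precisely the argument the paper outsources, so your write-up makes the theorem self-contained. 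For (ii) $\Rightarrow$ (iii) the paper reduces by the curve selection lemma to an analytic curve $\lambda(s)$ and distinguishes the two cases $|\langle\grad_z\rho,\grad F\rangle|\sim|\grad_z\rho|\,|\grad F|$ and $|\langle\grad_z\rho,\grad F\rangle|\ll|\grad_z\rho|\,|\grad F|$; that dichotomy is exhaustive only because along an analytic curve any two such quantities are asymptotically comparable, which is what curve selection buys. Your pointwise dichotomy ($\sin\theta\ge\tfrac12$ or $\cos\theta\ge\tfrac12$ at each point, then split an arbitrary sequence into the corresponding subsequences) implements the same underlying idea --- near-collinearity of $\grad_z\rho$ and $\grad F$ invokes the ratio clause, near-orthogonality invokes the $(a)$-clause --- but needs no analyticity and no curve selection, so it is strictly more elementary. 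The one caveat, which you flag (``where $d\rho|_{T_xX}\neq 0$'') but do not resolve, and which the paper leaves equally implicit, concerns points of $X$ where $\grad_z\rho_{|X}=0$: there $W_x=T_xX$ has the wrong dimension for the grassmannian $G(\dim X-1,\dim M)$ and the ratio in (ii) reads $0/0$; such sequences are excluded by the very definition of $(c)$-regularity, and both your argument and the paper's tacitly work away from them, so this is a shared imprecision rather than a gap specific to your proof.
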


\begin{proof}
Since (i) $\Leftrightarrow$ (ii) is proved in (\cite{OMA}, Theorem
1), and (iii) $ \Rightarrow $ (ii) is trivial, it is enough to see
(ii) $\Rightarrow$ (iii).

To show that (ii) $ \Rightarrow$ (iii), it suffices to show this on
any analytic curve $\lambda(s) = (z(s), t(s)) \in X$ and $\lambda(s)
\to Y$. Indeed, we have to distinguish two cases :

First case, we suppose that along $\lambda$, $|\langle \grad_z\rho,
\grad F \rangle|   \sim |\grad_z\rho| |\grad F |$, hence by
(\ref{control}) and (ii), we have
$$
|(\grad_z\rho_{|X})_t| = |\frac{\langle \grad_z\rho, \grad F
\rangle}{|\grad F|^2}\grad_tF | \ll  \frac{\|\grad F
\wedge\grad_z\rho\|}{| \grad F|}.
$$
But this clearly implies
$$
| \grad_t F|\ll \frac{\|\grad F \wedge\grad_z\rho\|}{| \grad_z
\rho|} \;\; \text{ along the curve }  \lambda(s),
$$
 where
 $|\langle \grad_z\rho, \grad F \rangle|   \sim |\grad_z\rho| |\grad F|. $

Second case, we suppose that along $\lambda$, $|\langle \grad_z\rho,
\grad F \rangle|   \ll |\grad_z\rho| |\grad F |$, thus

$$
\|\grad F \wedge\grad_z\rho\| \sim |\grad_z\rho| |\grad F| \text{
along the curve } \lambda(s).
$$
 On the other hand, by
the Whitney $(a)$-regularity in (ii) we get
$$
|\grad_tF| \ll |\grad F|.
$$
Therefore, $ |\grad_tF| \ll |\grad F| \sim \frac{\|\grad F
\wedge\grad_z\rho\|}{| \grad_z \rho|}$ along the curve $\lambda(s)$.
The Theorem \ref{critere} is proved
\end{proof}

 \section{ Proof of the theorem \ref{manain} }

 Before starting the proofs, we will recall
some important results on the Newton number and the geometric
characterization of $\mu$-constancy.

\begin{thm}[A. G. Kouchnirenko \cite{AGK}]\label{kouchnirenko}
 Let $f \colon (\C^n, 0) \to (\C,0)$ be the germ
of a holomorphic function with an isolated singularity, then the
Milnor number $\mu(f) \geq \nu(f$). Moreover, the equality holds if
$f$ is non-degenerate.
\end{thm}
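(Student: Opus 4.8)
The plan is to prove the equality $\mu(f)=\nu(f)$ for non-degenerate convenient germs first, and then deduce the general inequality $\mu(f)\geq\nu(f)$ by a semicontinuity argument. I would begin with the reduction to the convenient case, exactly as noted for $F$ in the text: if $f$ has an isolated singularity but is not convenient, then adding sufficiently high powers $z_i^N$ of the missing variables alters neither the analytic isomorphism class (hence $\mu(f)$) nor the combinatorial data entering $\nu(f)$, so I may assume $\Gamma_+(f)$ meets every coordinate axis.

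For the equality in the non-degenerate case, I would compute $\mu(f)$ topologically through the Milnor fibre. The Milnor fibre $\{f=\delta\}\cap B_\epsilon$ is homotopy equivalent to a bouquet of $\mu(f)$ spheres of dimension $n-1$, so its Euler characteristic equals $1+(-1)^{n-1}\mu(f)$. The geometric heart of the argument is a toric modification: one chooses a regular simplicial fan $\Sigma$ in $\R^n_+$ subdividing the dual (normal) fan of $\Gamma_+(f)$ and compatible with the coordinate subspaces, producing a proper map $\pi\colon X_\Sigma\to\C^n$. The strict transform of a generic fibre then meets each torus orbit of $X_\Sigma$, and \emph{here is precisely where non-degeneracy enters}: the Kouchnirenko condition forces this intersection to be transverse and smooth on every orbit. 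Consequently the Milnor fibre is stratified into pieces, each a non-degenerate hypersurface inside an algebraic torus whose Newton polytope is a face of $\Gamma_+(f)$.

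The decisive combinatorial input is then the Bernstein--Kouchnirenko theorem together with Khovanskii's Euler-characteristic refinement: the Euler characteristic of a non-degenerate hypersurface in a torus $(\C^\ast)^k$ is expressed, up to sign, by the normalised volume of its Newton polytope. Assembling these contributions over all orbits of $X_\Sigma$ and applying inclusion--exclusion over the intersections with the coordinate hyperplanes should reproduce exactly the alternating sum $n!V_n-(n-1)!V_{n-1}+\cdots+(-1)^n$ defining $\nu(f)$. I expect this last bookkeeping to be the main obstacle: matching the stratified Euler-characteristic sum with the defining expression for the Newton number requires careful accounting of the faces of $\Gamma_-(f)$, their intersections with the coordinate planes of each dimension, and the transversality that non-degeneracy supplies along each stratum.

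Finally, for an arbitrary (possibly degenerate) germ $f$ I would invoke the upper semicontinuity of the Milnor number in analytic families. Among all germs with the fixed Newton polyhedron $\Gamma_+(f)$ the non-degenerate ones are generic, so they attain the minimal value of $\mu$; deforming $f$ to a non-degenerate $f'$ with $\Gamma_+(f')=\Gamma_+(f)$ yields $\mu(f)\geq\mu(f')=\nu(f')=\nu(f)$, the last equality because $\nu$ depends only on the Newton polyhedron. This gives the inequality in general and completes the proof.
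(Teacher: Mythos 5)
First, a point about the comparison itself: the paper does \emph{not} prove this statement. Theorem \ref{kouchnirenko} is quoted from Kouchnirenko \cite{AGK} and used as a black box throughout, so there is no in-paper proof to measure your attempt against. Judged on its own, your outline follows the known geometric route to Kouchnirenko's theorem --- a toric modification subordinate to the dual fan of $\Gamma_+(f)$, Euler characteristics of the Milnor fibre, and the Bernstein--Khovanskii formula $\chi(Z)=(-1)^{k-1}k!\,\mathrm{Vol}(P)$ for a non-degenerate hypersurface $Z\subset(\C^\ast)^k$ with Newton polytope $P$ --- which is genuinely different from Kouchnirenko's original, essentially algebraic, argument (a computation of $\dim\mathcal{O}_n/J(f)$ via the Newton filtration and Koszul complexes). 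Your logical architecture --- prove equality for non-degenerate convenient germs first, then recover the general inequality from upper semicontinuity of $\mu$ together with genericity of non-degeneracy --- is sound, and in fact mirrors the way the paper itself combines semicontinuity with this theorem (via the auxiliary deformation $\widetilde F$) in the proof of Theorem \ref{manain}.

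That said, the proposal has genuine gaps, and the largest is precisely the step you set aside as ``bookkeeping.'' Matching the orbit-by-orbit Euler-characteristic contributions with the alternating sum $n!V_n-(n-1)!V_{n-1}+\cdots+(-1)^n$ is not an afterthought: together with the local-to-global passage (the Milnor fibre is a local object, while Khovanskii's formula concerns global hypersurfaces in tori, so one needs A'Campo's formula on the toric modification, or an equivalent device, to connect them), this identification constitutes essentially the entire content of the theorem and is what occupies the bulk of Kouchnirenko's and Varchenko's papers; deferring it means the equality case is simply not proved. Second, the genericity statement you invoke --- that among germs with a fixed Newton polyhedron the non-degenerate ones form a dense Zariski-open set of coefficients --- is itself a nontrivial theorem of \cite{AGK}; without it your semicontinuity argument produces no non-degenerate comparison germ $f'$, so it must be proved or explicitly cited, not asserted. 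Finally, two smaller points in the reduction to the convenient case: for a non-convenient germ, $\nu(f)$ is \emph{defined} through the stabilization $f+\sum_i z_i^N$, so the claim that the ``combinatorial data entering $\nu(f)$'' is unchanged is a matter of definition rather than an observation; and one must check that the added terms can be chosen so that non-degeneracy is preserved, which requires an argument (e.g.\ genericity of the added coefficients) rather than being automatic.
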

As an immediate corollary we have
\begin{cor}[M. Furuya \cite{MF}]\label{furuya}
 Let $f, \,\,g\colon (\C^n, 0) \to(\C,
0)$ be two germs of holomorphic functions with $\Gamma_+(g)
\subset\Gamma_+(f)$. Then $\nu(g) \geq\nu (f)$.
\end{cor}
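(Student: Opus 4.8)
The plan is to derive Corollary~\ref{furuya} directly from Kouchnirenko's theorem (Theorem~\ref{kouchnirenko}) by a standard specialization argument. The inclusion $\Gamma_+(g)\subset\Gamma_+(f)$ says that $g$ has the ``larger'' Newton polyhedron, so geometrically $\Gamma_-(g)\supset\Gamma_-(f)$; since the Newton number is built (up to signs) from volumes of $\Gamma_-$, the monotonicity $\nu(g)\geq\nu(f)$ should be essentially a statement about how these alternating sums of volumes behave under inclusion of polyhedra. The first thing I would check is the convenience hypothesis: the Newton number as defined in the excerpt requires $f$ (and $g$) to be convenient, and the cleanest route is to reduce to that case, invoking the remark in the paper that adding high powers $z_i^N$ does not change the relevant class and only enlarges the polyhedron in a controlled way.

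The main idea I would pursue is to realize both inequalities as instances of the Milnor-number inequality $\mu\geq\nu$ together with a deformation that keeps one polyhedron fixed. Concretely, I would consider a generic non-degenerate $g$ with the prescribed polyhedron $\Gamma_+(g)$; by Theorem~\ref{kouchnirenko}, for such a non-degenerate $g$ one has $\mu(g)=\nu(g)$, and similarly a generic non-degenerate $f$ with polyhedron $\Gamma_+(f)$ satisfies $\mu(f)=\nu(f)$. The task then becomes comparing the two Newton numbers. Because $\nu$ depends only on the polyhedron and not on the particular coefficients, it suffices to compare $\nu$ for the two polyhedra, and here I would use that a generic function supported on the smaller polyhedron $\Gamma_+(f)$ can be viewed as a degeneration (a special coefficient choice) of a function supported on $\Gamma_+(g)$, so that a semicontinuity statement for the Milnor number under such degenerations yields $\mu(g)\le\mu(f)$, hence $\nu(g)\le\nu(f)$; I must be careful that the inequality direction matches the inclusion of polyhedra.

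The step I expect to be the main obstacle is getting the direction of the inequality correct and rigorous, because the inclusion $\Gamma_+(g)\subset\Gamma_+(f)$ is a containment of the \emph{unbounded} Newton polyhedra, and one must translate it carefully into the corresponding containment of the \emph{bounded} cones $\Gamma_-$ before comparing volumes. The alternating-sum structure of $\nu$ means that naive volume monotonicity is not automatic, so the cleanest and safest argument is probably the one that avoids direct volume bookkeeping altogether and instead leans on Kouchnirenko's equality $\mu=\nu$ for non-degenerate representatives combined with upper-semicontinuity of the Milnor number in families. If a purely combinatorial proof is desired, I would instead establish the volume inequality by induction on the dimension $n$, treating the intersections with coordinate hyperplanes (which define the lower-dimensional $V_k$) via the inductive hypothesis, with the base case $n=1$ being immediate. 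In either route the substantive input is Theorem~\ref{kouchnirenko}; the corollary is then a short deduction, which matches the paper's framing of it as an ``immediate corollary.''
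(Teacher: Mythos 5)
Your overall plan --- replace $f$ and $g$ by generic non-degenerate representatives of their polyhedra, use Kouchnirenko's equality $\mu=\nu$ from Theorem~\ref{kouchnirenko}, and compare via upper semicontinuity of the Milnor number --- is the natural way to make the paper's ``immediate corollary'' precise (the paper itself gives no proof, citing Furuya, and this specialization pattern is exactly the one used later with the family $\widetilde{F}$ in the proof of Theorem~\ref{manain}). But your execution reverses the degeneration and therefore proves the \emph{reverse} of the statement: you conclude $\mu(g)\le\mu(f)$, hence $\nu(g)\le\nu(f)$, whereas the corollary asserts $\nu(g)\ge\nu(f)$. The root of the error is the sentence calling $\Gamma_+(f)$ ``the smaller polyhedron'': since $\Gamma_+(g)\subset\Gamma_+(f)$, it is $\Gamma_+(g)$ that is the smaller set. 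Consequently the degeneration you posit cannot exist: in a family $\sum_\nu c_\nu(t)z^\nu$, specializing the parameter can only make coefficients vanish, so the Newton polyhedron of the special fibre is \emph{contained} in that of nearby fibres; equivalently, functions supported in $\Gamma_+(g)$ are precisely the special coefficient choices (coefficients on $\Gamma_+(f)\setminus\Gamma_+(g)$ set to zero) inside the space of functions supported in $\Gamma_+(f)$, not the other way around. A function with polyhedron $\Gamma_+(f)\supsetneq\Gamma_+(g)$ can never arise as a degeneration of one with polyhedron $\Gamma_+(g)$.

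The correct direction does give the corollary. Assume both polyhedra convenient (add the same high powers $z_i^N$ to both; this preserves the inclusion), and let $g_0$ be a generic non-degenerate germ with $\Gamma_+(g_0)=\Gamma_+(g)$, so $\mu(g_0)=\nu(g)$. Deform $g_0$ by the vertex monomials of $\Gamma(f)$:
$$
F_\lambda \;=\; g_0+\sum_{\alpha\in \mathrm{ver}(f)}\lambda_\alpha z^{\alpha},
$$
so that $F_0=g_0$ is the \emph{special} fibre, while for generic small $\lambda\ne 0$ the fibre $F_\lambda$ has Newton polyhedron $\mathrm{conv}\bigl(\Gamma_+(g)\cup\Gamma_+(f)\bigr)=\Gamma_+(f)$ and is non-degenerate (non-degeneracy being a generic condition on coefficients), whence $\mu(F_\lambda)=\nu(f)$ by Theorem~\ref{kouchnirenko}. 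Upper semicontinuity of the Milnor number --- the special fibre has $\mu$ at least that of nearby fibres, exactly as in (\ref{3.2}) of the paper --- then gives $\nu(g)=\mu(g_0)\ge\mu(F_\lambda)=\nu(f)$, as required. A quick test on $f=z_1^2+\cdots+z_n^2$ and $g=z_1^3+\cdots+z_n^3$ (here $\Gamma_+(g)\subset\Gamma_+(f)$, $\nu(f)=1$, $\nu(g)=2^n$) would have caught the wrong sign. Your fallback combinatorial induction is too sketchy to evaluate, so as written the proposal does not establish the corollary; with the degeneration direction corrected as above, it does.
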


On the other hand, concerning the $\mu$-constancy, we have

\begin{thm}[Greuel \cite{greuel}, L\^e-Saito \cite{LS}, Teissier \cite{BT}]\label{le-saito-tessier}
Let $F \colon (\C^n\times \C^m, 0) \to (\C, 0)$ be the deformation
of a holomorphic $f \colon (\C^n, 0) \to (\C, 0)$ with isolated
singularity. The following statements are equivalent.
\begin{enumerate}
\item $F$ is a $\mu$-constant deformation of $f$.

\item $\frac{\grad F}{\grad t_j}\in \overline{J(F_t)}$, where $\overline{J(F_t)}$
denotes the integral closure of
the Jacobian ideal of $F_t$ generated by the partial derivatives of
$F$ with respect to the variables $z_1,\dots, z_n$.

\item  The deformation $ F(z, t) = F_t(z)$ is a Thom map, that is,
$$
\sum_{ j=1}^m |\frac{\grad F}{\grad t_j} | \ll \| \grad F\| \text{
as } (z, t) \to (0, 0).
$$

\item  The polar curve of $F$ with respect to $\{t = 0\}$ does not split,
that is,
$$
\{(z,t) \in \C^n \times \C^m \,\,| \,\,\grad_zF (z,t) = 0\} = \{0\}
\times \C^m \text{ near } (0,0).
$$

\end{enumerate}
\end{thm}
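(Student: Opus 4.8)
The statement is a compendium of classical equivalences, so the plan is not to seek one unified argument but to prove it as a cycle, assigning to each arc the tool best suited to it: conservation of the Milnor number for the discrete conditions (1) and (4), and the analytic (curve/order) description of integral closure for the metric conditions (2) and (3).

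First I would prove (1) $\Leftrightarrow$ (4) by the principle of conservation of the total Milnor number. Because $f=F_0$ has an isolated singularity, one may fix a Milnor ball $B$ around $0$ so that, for all small $t$, the critical points of $F_t$ inside $B$ are isolated and the sum $\sum_{p}\mu(F_t,p)$ of their Milnor numbers is independent of $t$, equal to $\mu_0$ (flatness of the Jacobian ideal). Upper semicontinuity gives $\mu(F_t,0)\le\mu_0$. If the critical locus $\{\grad_z F = 0\}$ coincides with $Y=\{0\}\times\C^m$ near the origin, condition (4), then no critical point can escape from $0$, the whole Milnor number stays concentrated there, and $\mu_t=\mu_0$, giving (1). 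Conversely, a critical point splitting off along a curve tending to $Y$ would carry a positive Milnor number away from $0$, forcing $\mu_t<\mu_0$ and contradicting $\mu$-constancy; hence (1) $\Rightarrow$ (4).

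Next I would handle the two metric conditions. The implication (3) $\Rightarrow$ (4) is nearly immediate: if $\sum_j|\frac{\grad F}{\grad t_j}|\ll\|\grad F\|$ as $(z,t)\to 0$, then along any arc on which $\grad_z F$ vanishes one has $\|\grad F\|\sim\sum_j|\frac{\grad F}{\grad t_j}|$, so the domination degenerates to $a\ll a$ and forces $\grad F\equiv 0$ there, which is impossible off $Y$ for an isolated singularity; thus the critical set near $0$ is exactly $Y$. For (2) $\Leftrightarrow$ (3) I would invoke the analytic characterization of integral closure (the Lejeune--Teissier criterion): a germ $h$ lies in $\overline{J(F_t)}$ iff $|h|\lesssim\max_i|\frac{\grad F}{\grad z_i}|\sim\|\grad_z F\|$ on a neighbourhood of $0$, equivalently iff $\mathrm{ord}_s\, h(\gamma(s))\ge\min_i\mathrm{ord}_s\,\tfrac{\grad F}{\grad z_i}(\gamma(s))$ along every analytic arc $\gamma$. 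Applying this to $h=\frac{\grad F}{\grad t_j}$ turns (2) into the bound $\sum_j|\frac{\grad F}{\grad t_j}|\lesssim\|\grad_z F\|$, which, since $\|\grad F\|^2=\|\grad_z F\|^2+\sum_j|\frac{\grad F}{\grad t_j}|^2$, is the Thom domination of (3).

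The remaining arc, (4) $\Rightarrow$ (3) (equivalently $\mu$-constancy $\Rightarrow$ the Thom $a_F$ condition), is the deep one and where I expect the real work to lie: it is Teissier's equisingularity theorem, whose proof passes through the relative polar curve and the fact that $\mu$-constancy controls the multiplicity of the discriminant, so that the limiting directions of $\grad_z F$ along arcs into $Y$ never collapse. The subtle point inside this passage is the strictness: the integral-closure criterion delivers only the weak bound $\lesssim$, whereas (3) demands the strict $\ll$. Upgrading one to the other requires a curve-selection argument ruling out any arc $\gamma(s)\to Y$ along which $|\frac{\grad F}{\grad t_j}(\gamma(s))|$ is merely comparable to $\|\grad_z F(\gamma(s))\|$, and the impossibility of such an equal-order arc is exactly what the no-splitting condition (4) guarantees. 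I would treat this equal-order elimination as the main obstacle and organize the rest of the argument so that (1), (2) and (4) are all reduced to it.
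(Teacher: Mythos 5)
You should know at the outset that the paper contains no proof of this statement to compare against: Theorem~\ref{le-saito-tessier} is quoted as a classical theorem of Greuel, L\^e--Saito and Teissier, with citations, and is used as a black box in the proof of Theorem~\ref{manain}. So your proposal has to stand on its own, and as written it does not: it is an outline in which the genuinely hard implications are named rather than proved. The arc $(4)\Rightarrow(3)$ (equivalently $(1)\Rightarrow(3)$, the L\^e--Saito theorem) is the heart of the equivalence, and you defer it to ``Teissier's equisingularity theorem'' together with a description of what would need to be controlled; similarly, the passage from (2) back to the other conditions (Teissier--Greuel: integral dependence of $\grad F/\grad t_j$ on the relative Jacobian ideal implies $\mu$-constancy) is nowhere supplied. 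Moreover, your plan for closing the cycle is circular: you propose to upgrade the weak bound $\lesssim$ given by the integral-closure curve criterion to the strict $\ll$ of (3) by invoking the no-splitting condition (4); but when (2) is the hypothesis, (4) is not available, so $(2)\Rightarrow(3)$ cannot use it unless $(2)\Rightarrow(4)$ is established independently, which it is not. Your $(1)\Leftrightarrow(4)$ argument by conservation of the total Milnor number in a fixed Milnor ball is correct and standard; it is the only complete arc in the proposal.

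There is also a concretely false step. In $(3)\Rightarrow(4)$ you correctly deduce that along an arc in $\{\grad_z F=0\}\setminus Y$ condition (3) forces the full gradient $\grad F$ to vanish identically, but the asserted contradiction --- ``$\grad F\equiv 0$ off $Y$ is impossible for an isolated singularity'' --- is wrong. Take $n=m=1$, $f(z)=z^4$ and $F(z,t)=z^2(z-t)^2=z^4-2tz^3+t^2z^2$: here $f$ has an isolated singularity, $F(0,t)\equiv 0$, and $\grad F$ vanishes identically along the arc $\{z=t\}$, which is not contained in $Y$. Condition (3) does fail for this $F$, as the theorem says it must, but it fails only at points \emph{near} the arc and off it (for $z=t+w$ with $w$ small, both $\grad_t F$ and $\grad_z F$ are asymptotically $2t^2w$ in modulus, so the ratio $|\grad_t F|/\|\grad F\|$ tends to $1/\sqrt{2}$, not $0$), and your argument never examines such points; the vanishing of $\grad F$ on the arc by itself carries no contradiction. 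The known proofs of $(3)\Rightarrow(4)$ go through $(3)\Rightarrow(1)$ (a Thom--Mather isotopy argument) or through $(3)\Rightarrow(2)\Rightarrow(1)$, i.e.\ precisely through the deep implications your outline leaves unproved, so this gap cannot be patched locally.
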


We now want to prove  theorem \ref{manain}, in fact, let $F :
(\C^n\times\C,0) \to(\C, 0)$ be a deformation of a holomorphic germ
$f \colon (\C^n, 0)\to (\C, 0)$ with an isolated singularity such
that the Milnor number $\mu(F_t)$ is constant. Suppose that $F_t$ is
non-degenerate. Then, by theorem \ref{kouchnirenko}, we have

\begin{equation}\label{3.1}
\mu(f) = \nu(f) = \mu(F_t) = \nu(F_t).
\end{equation}

Consider the deformation $\widetilde{F}$ of f given by
$$
\widetilde{F}(z, t, \lambda) = F_t(z) +
\sum_{\alpha\in\text{ver}(F_t)} \lambda_{\alpha}z^{\alpha}.
$$
From the upper semi-continuity of Milnor number \cite{JM}, we obtain
\begin{equation}\label{3.2}
 \mu(f) \geq\mu(\widetilde{F}_{t,\lambda}) \text{ for }(t, \lambda) \text{ near } (0, 0).
\end{equation}

By Theorem \ref{kouchnirenko} and Corollary \ref{furuya} therefore
$$
\mu(\widetilde{F}_{t,\lambda}) \geq\nu(\widetilde{F}_{t,\lambda})
\geq \nu(F_t).
$$
It follows from (\ref{3.1}) and (\ref{3.2}) that the deformation
$\widetilde{F}$ is $\mu$-constant, and hence, by Theorem
\ref{le-saito-tessier} we get
\begin{equation}\label{3.3}
|\grad_t F| + \sum_{\alpha \in\text{ver}(F_t)} |z^{\alpha}| \ll
|\grad_z F + \sum_{\alpha \in\text{ver}(F_t)}
\lambda_{\alpha}z^{\alpha}| \text{ as } (z, t, \lambda) \to (0, 0,
0).
\end{equation}

Therefore, for all $\alpha\in \text{ver}(F_t)$ we have $|z^{\alpha}|
\ll |\grad_z f|$, and so $ m(z^{\alpha}) \geq m(f)$. Hence the
equality $m(F_t) = m(f)$ follows. In other words, $F$ is
equimultiple.

 We also
show that condition (\ref{3.3}), in fact, implies Bekka's
$(c)$-regularity, hence, this deformation is topologically trivial.
For this purpose, we need the following lemma (see \cite{AP}).
\begin{lem}\label{lem}
 Suppose $F_t$ is a deformation as above, then we have
\begin{equation}\label{3.4}
\sum_{\alpha \in\text{ver}(F_t)} |z^{\alpha}| \ll \inf_{\eta\in
\C}\{|\grad F + \sum_{\alpha \in\text{ver}(F_t)} \eta
\overline{z}^{\alpha}\grad_z z^{\alpha}|\} \text{ as } (z, t) \to
(0, 0), F(z, t) = 0.
\end{equation}
\end{lem}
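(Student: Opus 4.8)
The plan is to read the right-hand side of (\ref{3.4}) as a single orthogonal projection. Since $\rho=\sum_{\alpha\in\text{ver}(F_t)}z^{\alpha}\overline z^{\alpha}$ and $\grad_z$ annihilates $\overline z^{\alpha}$, one has $\grad_z\rho=\sum_{\alpha}\overline z^{\alpha}\grad_z z^{\alpha}$, so the vector inside the norm in (\ref{3.4}) is exactly $\grad F+\eta\,\grad_z\rho$. Because $\grad_z\rho$ has no $t$-component, the elementary identity $\inf_{\eta\in\C}|\grad F+\eta\,\grad_z\rho|=\|\grad F\wedge\grad_z\rho\|/|\grad_z\rho|$ shows that (\ref{3.4}) is nothing but $\sum_{\alpha}|z^{\alpha}|\ll\|\grad F\wedge\grad_z\rho\|/|\grad_z\rho|$, i.e. the Newton-vertex counterpart of criterion (iii) of Theorem \ref{critere}; this is exactly the form in which the lemma feeds into the proof of Theorem \ref{manain}.

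The engine is relation (\ref{3.3}), the Thom condition for the enlarged deformation $\widetilde F$ in the parameters $(t,\lambda)$, namely $|\grad_t F|+\sum_{\alpha}|z^{\alpha}|\ll|\grad_z F+\sum_{\alpha}\lambda_{\alpha}\grad_z z^{\alpha}|$ as $(z,t,\lambda)\to 0$. I would specialise $\lambda_{\alpha}=\eta\,\overline z^{\alpha}$, which collapses the perturbation to $\eta\,\grad_z\rho$. For $\eta$ in any fixed compact subset of $\C$ one has $\lambda_{\alpha}=\eta\,\overline z^{\alpha}\to 0$ as $(z,t)\to 0$, so (\ref{3.3}) applies and yields $\sum_{\alpha}|z^{\alpha}|\ll|\grad_z F+\eta\,\grad_z\rho|\le|\grad F+\eta\,\grad_z\rho|$ for each such $\eta$; taking $\eta=0$ already records $\sum_{\alpha}|z^{\alpha}|\ll|\grad_z F|$, which I use below.

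Since the right-hand side of (\ref{3.4}) is an infimum over all of $\C$, the remaining task is to control the minimiser. The least-squares formula gives $\eta^{*}=-\langle\grad_z F,\grad_z\rho\rangle/|\grad_z\rho|^{2}$, so $|\eta^{*}|\le|\grad_z F|/|\grad_z\rho|$ by Cauchy-Schwarz. Euler's relation $\sum_i z_i\,\frac{\partial\rho}{\partial z_i}=\sum_{\alpha}|\alpha|\,|z^{\alpha}|^{2}\ge 2\rho$ (using $|\alpha|\ge m(f)\ge 2$ for every vertex) forces $|\grad_z\rho|\gtrsim\rho/|z|\ge|z^{\alpha}|^{2}/|z|$, whence $|\lambda^{*}_{\alpha}|=|\eta^{*}|\,|z^{\alpha}|\lesssim|\grad_z F|\,|z|/|z^{\alpha}|$. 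Whenever this tends to $0$, the previous paragraph applied at $\eta=\eta^{*}$ gives the conclusion at once.

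The main obstacle is the opposite regime, in which $\eta^{*}$ is so large that $\lambda^{*}$ does not tend to $0$; geometrically this is the case where $\grad_z F$ is nearly parallel to $\grad_z\rho$, the infimum is attained far outside the range in which (\ref{3.3}) is directly available, and the projection term in $|\grad F+\eta^{*}\grad_z\rho|^{2}=|\grad F|^{2}-|\eta^{*}|^{2}|\grad_z\rho|^{2}$ may absorb almost all of $|\grad F|^{2}$. I expect to settle this by a curve-selection argument in the spirit of the proof of Theorem \ref{critere}: it suffices to test (\ref{3.4}) on a real-analytic arc $(z(s),t(s))\to 0$ with $F=0$, along which $|z^{\alpha}|$, $|\grad_z\rho|$ and $|\grad_z F|$ all have well-defined leading orders in $s$. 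Comparing these orders and invoking the \L ojasiewicz-type lower bound for $|\grad_z F_t|$ available for the non-degenerate family $F_t$, one contradicts (\ref{3.3}) unless $\sum_{\alpha}|z^{\alpha}|\ll\|\grad F\wedge\grad_z\rho\|/|\grad_z\rho|$. This exponent bookkeeping, where the Newton-polyhedron hypotheses genuinely enter, is the technical heart of the argument, and is the step I would expect to follow \cite{AP} most closely.
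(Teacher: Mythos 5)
You set the lemma up correctly: the vector in (\ref{3.4}) is $\grad F+\eta\,\grad_z\rho$, the infimum is attained at the least-squares minimiser $\eta^{*}=-\langle\grad F,\grad_z\rho\rangle/|\grad_z\rho|^{2}$, and in the regime where $\lambda^{*}_{\alpha}=\eta^{*}\overline z^{\alpha}\to 0$ your specialisation $\lambda_{\alpha}=\eta\,\overline z^{\alpha}$ in (\ref{3.3}) gives exactly the required bound. This coincides with the first case of the paper's proof: there one takes, by the curve selection lemma, an analytic arc $p(s)=(z(s),t(s))$ in $F^{-1}(0)$ and an analytic $\eta(s)$ violating (\ref{3.4}), sets $g=\rho^{1/2}$ and $\gamma=\eta g$, and the case $\gamma\to 0$ is dispatched just as you do, since $|\eta\,\overline z^{\alpha}|\le|\gamma|\to 0$ for every vertex $\alpha$.

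The complementary regime, which you yourself identify as the heart of the matter, is however left as a plan, and the plan points in the wrong direction. You propose to ``contradict (\ref{3.3})'' by exponent bookkeeping together with a \L ojasiewicz-type lower bound for the non-degenerate $F_t$; but (\ref{3.3}) is a statement only about the limit $(z,t,\lambda)\to(0,0,0)$, so when $\lambda^{*}\not\to 0$ it asserts nothing and there is nothing to contradict, and non-degeneracy has already been spent in deriving (\ref{3.3}), so no fresh \L ojasiewicz input is available. What actually closes this case in the paper is a differentiation trick exploiting the constraint $F(p(s))\equiv 0$. Differentiating $g$ along the arc gives $|\tfrac{d}{ds}g|\le g^{-1}\bigl|\sum_{\alpha}\overline z^{\alpha}dz^{\alpha}\tfrac{dz}{ds}\bigr|\lesssim\bigl|\sum_{\alpha}\eta(s)\overline z^{\alpha}dz^{\alpha}\tfrac{dz}{ds}\bigr|$, the last step precisely because $|\gamma|=|\eta|g\gtrsim 1$; since $dF(p(s))\tfrac{dp}{ds}\equiv 0$ this term can be added for free, whence $|\tfrac{d}{ds}g|\lesssim\bigl|\tfrac{dp}{ds}\bigr|\,\bigl|\grad F+\eta\,\grad_z\rho\bigr|$. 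The analytic-arc asymptotics $s\,|\tfrac{dp}{ds}|\sim|p|$ and $s\,|\tfrac{d}{ds}g|\sim g$ then yield $g\lesssim|p|\,|\grad F+\eta\,\grad_z\rho|\ll|\grad F+\eta\,\grad_z\rho|$, contradicting the assumed failure of (\ref{3.4}) because $g\sim\sum_{\alpha}|z^{\alpha}|$. So the missing idea is not a Newton-polyhedron estimate at all: it is that along an arc inside $F^{-1}(0)$ the pairing of $\grad F+\eta\,\grad_z\rho$ with the velocity vector sees only the $\eta\,\grad_z\rho$ part, and for $|\eta|g\gtrsim 1$ this forces $|\grad F+\eta\,\grad_z\rho|$ to dominate $g$ by a factor $1/|p|$. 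Without this (or some substitute) your argument proves the lemma only in the regime $\eta^{*}\overline z^{\alpha}\to 0$.
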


\begin{proof}
Suppose (\ref{3.4}) does not hold. Then by the curve selection
lemma, there exists an analytic curve $p(s) = (z(s), t(s))$ and an
analytic function $\eta(s)$, $s \in [0, \epsilon)$, such that :
\item {(a)} $p(0)=0$;
\item {(b)} $F(p(s)) \equiv 0$, and hence $dF(p(s)) \frac{d p}{d s} \equiv 0$;
\item (c) along the curve p(s) we have
$$
\sum_{\alpha \in\text{ver}(F_t)} |z^{\alpha}| \gtrsim |\grad F +
\sum_{\alpha \in\text{ver}(F_t)} \eta(s)
\overline{z}^{\alpha}\grad_z z^{\alpha}|.
$$
Set
\begin{equation}\label{3.5}
g(z,\overline{z})=\left(\sum_{\alpha
\in\text{ver}(F_t)}\overline{z}^{\alpha}z^{\alpha}
\right)^{\frac{1}{2}} \;\text{ and }\; \gamma(s)=\eta(s)g(z(s),
\overline{z}(s))
\end{equation}

First suppose that $\gamma(s) \to 0$. Since $|\overline{z}^{\alpha}|
\leq g$, we have,
$$
\lambda_{\alpha}=\frac{\gamma(s)\overline{z}^{\alpha}(s)}{g(z(s),\overline{z}(s))}\to
0,\;\; \forall\alpha\in\text{ ver}(F_t)
$$
Next, using (\ref{3.3}) and (\ref{3.5}) it follows
$$
\sum_{\alpha \in\text{ver}(F_t)}| z^{\alpha}(s)|\ll |\grad F(p(s)) +
\sum_{\alpha \in\text{ver}(F_t)} \eta(s)
\overline{z}^{\alpha}(s)\grad_z z^{\alpha}(s)|\;\; \text{ as } s\to
0,
$$
which contradicts (c).

Suppose now that the limit of $\gamma(s)$ is not zero (i.e.,
$|\gamma(s)| \gtrsim 1$ ). Since $p(0) = 0$ and
$g(z(0),\overline{z}(0)) = 0$, we have, asymptotically as  $s\to 0$,
\begin{equation}\label{3.6}
 s| \frac{d p}{d s}(s)| \sim |p(s)| \text{ and } s \frac{d}{ d s}
g(z(s), \overline{z}(s)) \sim g(z(s), \overline{z}(s)).
\end{equation}
But
\begin{equation}\label{3.7}
\frac{d}{ d s} g(z(s), \overline{z}(s))
 =\sum_{\alpha \in\text{ver}(F_t)}\frac{1}{2g(z(s),
 \overline{z}(s))}\left(\overline{z}^{\alpha}d z^{\alpha}\frac{d z}{d s} + z^{\alpha}d\overline{z}^{\alpha}\frac{d\overline{z}}{d
 s}\right).
\end{equation}

We have $\overline{z}^{\alpha}d z^{\alpha}\frac{d z}{d s}
=\overline{z^{\alpha}d\overline{z}^{\alpha}\frac{d\overline{z}}{d
s}} $ and $1 \lesssim |\gamma(s)|$. Thus,
\begin{equation}\label{3.8}
| \frac{d}{ d s} g(z(s), \overline{z}(s))|\lesssim\left|\sum_{\alpha
\in\text{ver}(F_t)}\frac{\gamma(s)}{g(z(s),
 \overline{z}(s))}\overline{z}^{\alpha}d z^{\alpha}\frac{d z}{d s}
 \right|.
\end{equation}
This together with (\ref{3.6}), (\ref{3.5}) and (b) gives
$$
g(z(s), \overline{z}(s))\sim |s \frac{d}{ d s} g(z(s),
\overline{z}(s)) |\lesssim s\left| \sum_{\alpha
\in\text{ver}(F_t)}\eta(s) \overline{z}^{\alpha}d z^{\alpha}\frac{d
z}{d s} + d F(p(s))\frac{d p}{d s}\right|.
$$
Hence
$$
g(z(s), \overline{z}(s))\lesssim s\left|\frac{d p}{d s}(s)\right|\,
\left| \sum_{\alpha \in\text{ver}(F_t)}\eta(s)
\overline{z}^{\alpha}\grad z^{\alpha}+ \grad F(p(s))\right|,
$$
which contradicts (c). This ends the proof of Lemma.
\end{proof}

We shall complete the proof of Theorem \ref{manain} Since
$\Gamma_+(\grad_tF) \subset \Gamma_+(F_t)$. Then, by an argument,
based again on the curve selection lemma, we get the following
inequality
\begin{equation}\label{3.9}
|\grad_tF| \lesssim \sum_{\alpha \in\text{ver}(F_t)}|z^{\alpha}|.
\end{equation}
Then, by the above Lemma \ref{lem}, we obtain
$$
|\grad_t F|\ll\inf_{\eta\in \C}\{|\grad F +\eta\grad_z\rho|\} \text{
as } (z,t)\to (0,0),\; F(z,t)=0,
$$
we recall that
$$
\rho(z) = \sum_{\alpha
\in\text{ver}(F_t)}z^{\alpha}\overline{z}^{\alpha}.
$$
But
$$
\inf_{\eta\in \C}\{|\grad F +\eta\grad_z\rho|\}^2=\frac{| \grad
F|^2| \grad_z\rho|^2-| \langle \grad_z\rho, \grad F \rangle|^2}{|
\grad_z \rho |^2}=\frac{\|\grad F \wedge\grad_z\rho\|^2}{| \grad_z
\rho|^2}.
$$
Therefore, by Theorem \ref{critere}, we see that the canonical
stratification $\Sigma(V_F )$ is $(c)$-regular for the control
function $\rho$, then $F$ is a topologically trivial deformation
(see\cite{KB}).

This completes the proof of Theorem \ref{manain}.

\begin{rem}
We should mention that our arguments still hold for any
$\mu$-constant deformation $F$ of  a weighted homogeneous polynomial
$f$ with isolated singularity. Indeed, we can find from Varchenko's
theorem \cite{ANV} that $\mu(f) = \nu(f) = \mu(F_t) = \nu(F_t)$.
Thus, the above proof can be applied.

Unfortunately this approach does not work, if we only suppose that
$f$ is non-degenerate. For consider the example of Altman \cite{KA}
defined by
$$
F_t(x, y, z) = x^5 + y^6 + z^5 + y^3z^2 + 2tx^2y^2z + t^2x^4y,
$$
which is a $\mu$-constant degenerate deformation of the
non-degenerate polynomial $f(x, y, z) = x^5 +y^6 +z^5 +y^3z^2.$ He
showed that this family has a weak simultaneous resolution. Thus, by
Laufer's theorem \cite{HBL}, $F$ is a topologically trivial
deformation. But we cannot apply the above proof because $\mu(f) =
\nu(f) = \mu(F_t) = 68$ and $\nu(F_t) = 67$ for $t \neq 0$.
\end{rem}

We conclude with several examples.

 \begin{example}  Consider the family
given by
$$
F_t(x, y, z) = x^{13} + y^{20 }+ zx^6y^5 + tx^6y^8 + t^2x^{10}y^3 +
z^l, \;\; l \geq 7.
$$

It is not hard to see that this family is non-degenerate. Moreover,
by using the formula for the computation of Newton number we get
$\muµ(F_t) = \nu(F_t) = 153 l + 32$. Thus, by theorem \ref{manain},
we have that $F_t$ is topologically trivial. We remark that this
deformation is not $\mu^{\ast}$-constant, in fact, the Milnor
numbers of the generic hyperplane sections $\{z = 0\} $ of $F_0$ and
$F_t$  for $t\neq 0$ are $260$ and $189$ respectively.
\end{example}

\begin{example}
Let
$$
F_t(x, y, z) = x^{10} + x^3y^4z + y^l + z^l + t^3x^4y^5 + t^5x^4y^5
$$
where $l \geq 6$. Since $\mu(F_t) = 2l^2 +32l +9$ and $F_t$ is a
non-degenerate family, it follows from Theorem \ref{manain} that $F$
is a topologically trivial deformation
\end{example}



\bigskip


\begin{thebibliography}{99}

\bibitem{OMA} Ould. M. Abderrahmane, \emph{ Stratification theory from the Newton
polyhedron point of view,} Ann. Inst. Fourier, Grenoble \textbf{54},
(2004), 235--252.

\bibitem{KA}K. Altmann, \emph{Equisingular deformation of isolated
2-dimensional hypersurface singularities,} Invent. math. \textbf{88}
(1987), 619--634.

\bibitem{KB} K. Bekka, \emph{$(c)$-r\'egularit\'e et trivialit\'e topologique,
Singularity theory and its applications,} Warwick 1989, Part I,
Lecture Notes in Math. \textbf{1462} (Springer, Berlin 1991),
42--62.

\bibitem{BS} J. Brianc¸on and J.P. Speder, \emph{La trivialit\'e
topologique n'implique pas les conditions de Whitney,} C. R. Acad.
Sci. Paris \textbf{280} (1976), 365–-367.

\bibitem{greuel} G-M. Greuel, \emph{Constant Milnor Number Implies Constant
Multiplicity For Quasihomogeneous Singularities,} Manuscritpta Math.
\textbf{56} (1986), 159--166.

\bibitem{MF} M. Furuya, \emph{Lower Bound of Newton Number,} Tokyo J. Math. \textbf{27} (2004),
177-186.

\bibitem{AGK} A.G. Kouchnirenko, \emph{Poly\`edres de Newton et nombres de
Milnor,} Invent. math. \textbf{32} (1976), 1-31.

\bibitem{HBL} H. B. Laufer, \emph{Weak simultaneous resolution for deformation of
Gorenstein surface singularities}, Proc. Symp. Pure Math.
\textbf{40} part 2 (1983), 1–28.

\bibitem{LR} D. T. Leˆ and C. P. Ramanujam, \emph{Invariance of Milnor's number
implies the invariance of topological type,} Amer. J. Math.
\textbf{98} (1976), 67-78.

\bibitem{LS} D.T. Leˆ and K. Saito,
\emph{La constence du nombre de Milnor donne des bonnes
stratifications,} Compt. Rendus Acad. Sci. Paris, s\'erie A
\textbf{272} (1973), 793--795.

\bibitem{JM} J. Milnor, \emph{Singular points of complex hypersurfaces}, Ann. of Math.
Stud. \textbf{61} (1968), Princeton Univer- sity Press.

\bibitem{MO} M. Oka, \emph{On the bifurcation of the multiplicity and topology of the
Newton boundary}, J. Math. Soc. Japan \textbf{31} (1979), 435-450.

\bibitem{MOKA} M. Oka, \emph{On the weak simultaneous resolution of a negligible
truncation of the Newton boundary,} Contemporary. Math., \textbf{90}
(1989), 199–-210.

\bibitem{DBO} D. B. O'Shea, \emph{Topologically Trivial Deformations
of Isolated Quasihomogeneous Singularities Are Equimultiple,} Proc.
A.M.S. \textbf{101}:2 (1987), 260--262.
\bibitem{AP} A. Parusi\'nski, \emph{Topological triviality of $\mu$-constant deformations of
type $f(x)+tg(x)$,} Bull. London Math. Soc \textbf{31} (1999),
686-692.

\bibitem{BT} B. Teissier, \emph{Cycles \'evanescents, section planes, et conditions de
Whitney,} Singularite´s a` Carg\`ese 1972, Ast\'erisque No
\textbf{7-8}, Soc. Math. Fr. 285-362 (1973)

\bibitem{trotman} D. Trotman, \emph{Equisingularit\'e et conditions de
Whitney,}  th\`ese   Orsay, January 15, 1980

\bibitem{ANV} A. N. Varchenko, \emph{A lower bound for the codimension of the stratum
µ-constant in term of the mixed Hodge structure,} Vest. Mosk. Univ.
Mat. \textbf{37} (1982), 29--31

\bibitem{OZ}O. Zariski, \emph{Open questions in the theory of singularities,} Bull. Amer. Math. Soc \textbf{77} (1971),481-491.

\end{thebibliography}
\end{document}